\newcommand{\fin}{\hspace*{\fill}$\square$\vspace*{2mm}}
\theoremstyle{plain}
\newtheorem{theorem}{Theorem}[section]
\newtheorem{lemma}[theorem]{Lemma}
\newtheorem{proposition}[theorem]{Proposition}
\theoremstyle{definition}
\newtheorem{definition}[theorem]{Definition}
\theoremstyle{remark}
\newtheorem{remark}[theorem]{\sc Remark}
\newtheorem{example}[theorem]{\sc Example}
\def\bN{{\mathbb N}}
\def\bP{{\mathbb P}}
\def\bR{{\mathbb R}}
\def\bX{{\mathbb X}}
\def\cM{{\mathcal M}}
\def\cS{{\mathcal S}}
\def\Si{{ \mathcal{S}_{\infty}}}
\def\Bi{{ \mathcal{B}_{\infty}}}
\def\cK{{\mathcal K}}
\def\ity{\infty}
\def\Arc{{\rm Arc}}
\def\rD{{\rm D}}
\def\ord{{\rm ord}}
\def\graph{{\rm graph}}
\def\Sing{{\rm Sing}}
\def\const.{{\rm const.}}
\def\d{{\rm d}}
\begin{document}
\title[Bifurcation locus of real polynomial maps]{Towards effective detection of the  bifurcation locus of real polynomial maps}

\author{Luis Renato G. Dias}
\address{Faculdade de Matem\'atica, Universidade Federal de Uberl\^andia, Av. Jo\~ao Naves de \'Avila 2121, 1F-153 - CEP: 38408-100, Uberl\^andia, Brazil.}
\email{lrgdias@famat.ufu.br}

\author{Susumu Tanab\'e}
\address{Math\'ematiques,  Universit\'e de Galatasaray, 34357 Istanbul, Turquie.}
\email{tanabesusumu@hotmail.com}

\author{Mihai Tib\u ar}
\address{Univ. Lille, CNRS, UMR 8524 - Laboratoire Paul Painlev\' e, F-59000 Lille, France}
\email{tibar@math.univ-lille1.fr}

\commby{Communicated by Teresa Krick}

\subjclass[2010]{14D06, 14Q20, 58K05, 57R45, 14P10, 32S20, 58K15}

\keywords{bifurcation locus, real polynomial maps, regularity at infinity, detection}

\thanks{LRGD and MT acknowledge support from the USP-COFECUB Uc
Ma 133/12 grant. LRGD acknowledges  support from  the Fapemig-Proc  APQ-00431-14 grant.
 ST and MT acknowledge support from the CNRS-Tubitak no. 25784 grant,  from Universit\' e de Lille 1 and from  Labex CEMPI  (ANR-11-LABX-0007-01). The authors thank the anonymous referees for their valuable suggestions.}

\begin{abstract}
We  answer to a problem raised by recent work of Jelonek and Kurdyka: how can one detect by rational arcs the bifurcation locus of a polynomial map $\bR^n\to\bR^p$ in case $p>1$. We  describe an effective estimation of the ``nontrivial'' part of the bifurcation locus.

\end{abstract}

\maketitle

 
\section{Introduction}

The \textit{bifurcation locus} of a polynomial map $f\colon \bR^n \to \bR^p$,  $n \geq p$,  is the smallest subset $B(f) \subset \bR^p$ such that $f$ is a locally trivial $C^\ity$-fibration over $\bR^p\setminus B(f)$.  
It is well known that $B(f)$ is the union of  the set of critical values $f(\Sing f)$
  and the set of \textit{bifurcation values at infinity} $\Bi(f)$ (see Definition \ref{d:bif}) which may be non-empty and disjoint from $f(\Sing f)$ even in very simple examples. Finding the bifurcation locus in the cases $p>1$ or $p=1$ and $n>2$ is yet an unreached ideal. Nevertheless one can obtain approximations by supersets of $\Bi(f)$ from exploiting asymptotical regularity conditions  \cite{ST}, \cite{Pa}, \cite{Ra}, \cite{Ga-ity},   \cite{Ti-reg}, \cite{KOS}, \cite{HP}, \cite{DRT}, \cite{CT}, \cite{Je-banach}, \cite{NZ}, \cite{JT} etc.
 
Improving the effectivity of the detection of asymptotically non-regular values becomes an important issue, for instance it leads to applications in optimisation problems \cite{HP2}, \cite{Sa}.  Along this trend,
   Jelonek and Kurdyka \cite{JK} produced recently an algorithm for finding the set of \emph{asymptotically critical values} $\cK_\ity(f)$ in case $p=1$. It is known that in this case $\cK_\ity(f)$ is finite and includes $\Bi(f)$. 
   A sharper estimation of $\Bi(f)$ has been found in the real setting \cite{DT} by approximating the set of \emph{asymptotic $\rho_a$-nonregular values} of $f$. The later method provides a finite set of values $A(f)$ with the following property:   $\Bi(f)\subset  A(f) \subset  \cK_\ity(f)$.

In case $p>1$ the bifurcation locus $\Bi(f)$ may be no more finite. Actually, by the Morse-Sard result proved by Kurdyka, Orro and Simon \cite{KOS} for $\cK_\ity(f)$, or by the one  obtained in \cite{DRT} for the sharper estimation $\Bi(f) \subset \cS_0(f)\subset \cK_\ity(f)$, one only knows that the sets $\cK_\ity(f)$ and $\cS_0(f)$  are contained in a 1-codimensional semi-algebraic subsets of $\bR^p$. 

 Our approach is based on the set $\cS_\ity(f)$ of non-regular values at infinity with respect to the Euclidean distance function from any point as origin, and which includes $\Bi(f)$.   Since the set of critical values  $f(\Sing f)$ is the image of an algebraic set and the well-known estimation methods apply, we consider it as the ``trivial'' part of the job.  The most difficult task is to apprehend the complements of $f(\Sing f)$ to the bifurcation locus $\Bi(f)$.

We shall detect here the ``nontrivial'' part $N\Si(f)$ of the bifurcation locus at infinity (defined at \S \ref{ss:nontriv}) which,  roughly speaking, contains the values of $\cS_\ity(f)$ which are not comming from the branches at infinity of the singular locus $\Sing f$. 

This note answers a question raised by the results \cite{JK} and \cite{DT}, 
as of how can one detect the bifurcation locus by rational arcs in the case $p>1$.


More precisely, given a polynomial map 
$f=(f_1,\ldots,f_p) \colon\bR^n \to \bR^p$, $\deg f_i\leq d$, we find all the values of the ``nontrivial'' part $N\Si(f)$ of $\cS_\ity(f)$ and hence of nontrivial part $N\Bi(f)$ of the bifurcation locus $\Bi(f)$, as follows:\\

\noindent
(1). We consider a set of rational paths: $(x(t),y(t))=\left(\sum_{-ds \le i \le s} a_it^i,
\sum_{- ds \le j \le 0}b_j t^j\right)\subset \bR^n \times \bR^p$,  where   
$s =  [p(d-1)+1]^{n-p} [p(d-1)(n-p) +2]^{p-1}$.

This means a finite number of vectorial coefficients $a_i \in \bR^n$, for  $-ds \le i \le s$,   and $b_j \in \bR^p$, for $- ds \le j \le 0$. 
\smallskip

\noindent
(2).  The coefficients are subject to several conditions, namely:  $\| b_0\| = 1$,  $\exists k>0$, $a_k \not= 0\in \bR^n$, we ask the annulation of the coefficients of the terms with positive exponents in the expansion of $f(x(t))$ and  the annulation of the coefficients of 
 the terms with non-negative exponents in the expressions $x_i(t)\phi_{j}(x(t),y(t))$, for all $i,j\in\{1,\ldots,n\}$ (cf \eqref{eq:phi} for the definition).
 
 \medskip

We denote by $\Arc_{\ity}(f)$ the algebraic subset of arcs obtained by this construction (steps (1) and (2) above), and by  $\alpha_0(\Arc_{\ity}(f))$ the set of limits $\lim_{t\to\ity}f(x(t))$, i.e. the free coefficient in the expansion of $f(x(t)$ for $(x(t),y(t))\in \Arc_{\ity}(f)$.
 Then our main result, Theorem \ref{t:main2}, proves the inclusions:  
\[N\Si(f) \subset \alpha_0(\Arc_{\ity}(f))\subset \cK_{\ity}(f).\]


\section{Regularity conditions at infinity and bifurcation loci}\label{s:reg-ity}

\subsection{Bifurcation locus}
 Let $f=(f_1,\ldots,f_p)\colon\bR^n\to\bR^p$ be a polynomial map, $n\geq p$.

\begin{definition}\label{d:bif}
We say that $t_0\in\bR^p$ is a \textit{typical value}  of $f$ if there exists a disk $D\subset \bR^p$ centered at $t_0$ such that the restriction $f_{|}\colon f^{-1}(D) \to D$ is a locally trivial $C^\ity$-fibration. Otherwise we say that $t_0$ is a \textit{bifurcation value} (or atypical value). We denote by $B(f)$ the set of bifurcation values of $f$.

We say that $f$ is {\em topologically trivial at infinity at $t_0\in \bR^p$} if there exists
a compact set $\mathcal{K}\subset \bR^n$ and a disk $D \subset \bR^p$ centered at $t_0$ such that the restriction $f_| :  f^{-1}(D)\setminus \mathcal{K} \to D$  is a locally trivial $C^\ity$-fibration. 
Otherwise we say that $t_0$ is a {\em bifurcation value  at infinity of} $f$. We denote by $\Bi(f)$ the  bifurcation locus at infinity of $f$.
\end{definition}

\subsection{The rho-regularity}\label{ss:rho}
 Let $a=(a_1,\ldots,a_n)\in\bR^n$ and let  $\rho_a\colon\bR^n \to \bR_{\geq 0}$,   $\rho_{a}(x)=(x_1-a_1)^2+\ldots+(x_n-a_n)^2$, be the Euclidian distance function to $a$. 
Let $f\colon\bR^n\to\bR^p$ be a polynomial map,  where $n\geq p$. 

\begin{definition}[Milnor set at infinity and the $\rho_a$-nonregularity locus] \cite{DT} \\
\label{d:milnor}
 The critical set $\mathcal{M}_a(f)$ of the map $(f, \rho_a)\colon\bR^n\to\bR^{p+1}$ is called the \emph{Milnor set of $f$} (with respect to the distance function). The following semi-algebraic set, cf \cite[Theorem 5.7]{DRT} and \cite[Theorem 2.5]{DT}:
\begin{equation}
 \cS_a(f):=\{t_0\in\bR^p\mid\exists \{ x_j\}_{j\in \bN}\subset \mathcal{M}_a(f), \lim_{j\to\infty}\| x_j\|=\infty\mbox{ and }\lim_{j\to\infty}f(x_j)=t_0\}
\end{equation}
will be called the set of \emph{asymptotic $\rho_a$-nonregular values}. If $t_0\notin \cS_a(f)$ we say that $t_0$ is \textit{$\rho_a$-regular at infinity}. Let  
$\Si (f):= \bigcap_{a\in \bR^n} \cS_{a}(f)$. 
\end{definition}


\begin{lemma}\label{l:semi-alg}  $\Si(f)$ is a semi-algebraic set.
\end{lemma}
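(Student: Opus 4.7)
The plan is to express $\Si(f)$ through a first-order formula over the reals and invoke the Tarski-Seidenberg theorem, so that the uncountable intersection $\bigcap_{a\in \bR^n} \cS_a(f)$ becomes a universal quantifier over a semi-algebraic parameter.

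First, I would verify that the Milnor set depends algebraically on the base point $a$. Since $\partial \rho_a/\partial x_i = 2(x_i - a_i)$, the rank condition defining $\cM_a(f)$ is the simultaneous vanishing of all $(p+1) \times (p+1)$ minors of a matrix whose entries are polynomials in $(a, x)$. Hence the total incidence set
$$\widetilde{\cM}(f) := \{(a, x) \in \bR^n \times \bR^n \mid x \in \cM_a(f)\}$$
is an algebraic subset of $\bR^{2n}$.

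Next, I would rewrite the sequence-based definition of $\cS_a(f)$ in purely first-order form. The condition $t_0 \in \cS_a(f)$ is equivalent to
$$\forall \varepsilon > 0,\ \forall R > 0,\ \exists x \in \bR^n \ \text{ with }\ (a, x) \in \widetilde{\cM}(f),\ \|x\|^2 > R^2,\ \|f(x) - t_0\|^2 < \varepsilon^2.$$
All atomic predicates are polynomial in $(a, t_0, x, \varepsilon, R)$, so by Tarski-Seidenberg the joint incidence set
$$T := \{(a, t_0) \in \bR^n \times \bR^p \mid t_0 \in \cS_a(f)\}$$
is semi-algebraic in $\bR^n \times \bR^p$.

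Finally, I would conclude by writing $\Si(f) = \{t_0 \in \bR^p \mid \forall a \in \bR^n,\ (a, t_0) \in T\}$ and applying Tarski-Seidenberg once more to eliminate the universal quantifier on $a$. I do not foresee a real obstacle: the only subtlety is that the original definition of $\cS_a(f)$ refers to sequences, but the equivalence with the $\forall \varepsilon\, \forall R\, \exists x$ reformulation is immediate from the definition of a limit, and once the polynomial $a$-dependence of $\cM_a(f)$ is recognized, the rest is routine quantifier elimination over $\bR$.
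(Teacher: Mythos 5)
Your proposal is correct and follows essentially the same route as the paper: both define the incidence set $\{(x,a)\mid x\in\mathcal{M}_a(f)\}$, express $\Si(f)$ by a first-order formula over the reals, and conclude by quantifier elimination (Tarski--Seidenberg). You simply make explicit the steps the paper leaves to the cited references, namely the polynomial dependence of $\mathcal{M}_a(f)$ on $a$ and the translation of the sequential limit condition into a $\forall\varepsilon\,\forall R\,\exists x$ formula.
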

\begin{proof} Let $f\colon\bR^n\to\bR^p$ be a polynomial mapping and let us consider the following semi-algebraic set:
\[\mathcal{W} :=\{(x,a)\in\bR^n\times \bR^n \mid x\in \mathcal{M}_a(f) \}.\]

By the definition of $\Si(f)$, we have:
\[\Si(f):=\{y\in\bR^p\, \mid \forall a\in\bR^n, \exists \{(x_k,a)\}\subset \mathcal{W} \mbox{ such that } f(x_k)\to y\},\]
which tells that $\Si(f)$ can be writen by using first-order formulas.  This means that $\Si(f)$ is a semi-algebraic set, see for instance \cite[pag.28-29]{Coste-book} and \cite[Prop. 2.2.4]{Bochnak}.    
\end{proof}

It has been proved in \cite{Ti-reg}, \cite{DRT}, \cite{DT} that one has the inclusion $\Bi(f)\subset \cS_a(f)$, for any $a\in \bR^n$, thus in particular:
\begin{equation}\label{eq:bif}
\Bi(f)\subset \Si (f).
\end{equation}

It was believed, cf \cite[Conjecture 2.11]{DT}, that \eqref{eq:bif} was an equality. We show here by an example that this is not the case, at least in the real setting.

\subsection{Example for $B_{\infty}(f) \not= \cS_{\infty}(f)$}\label{ss:example} 
We consider the two-variable real polynomial\footnote{We thank Y. Chen for suggesting us to test this example.} constructed in \cite{TZ},  $f\colon\bR^2\to\bR$, $f(x,y)=y(2x^2y^2-9xy+12)$.  We show that $\cS_{\infty}(f) = \{ 0\}$ and $B_{\infty}(f) = \emptyset$. 
 
It was already proved in \cite{TZ} that $f$ has no singular value, no bifurcation value and that $\cS_{0}(f) \subset \{ 0\}$. We shall prove here that this inclusion is an equality. Moreover, we prove  here that $\{ 0\} \subset \cS_{a}(f)$ for any center $a\in \bR^2$.

For any fixed $a=(a_1,a_2)\in\bR^2$, we have: 
\[
\cM_a(f)=\{(x,y)\in\bR^2\mid y^2(4xy-9)(y-a_2)=6(x-a_1)(xy -1)(xy-2).\}
\]
For $x=0$ we eventually get solutions of the above equation but which have no influence on the set $\cS_a(f)$. By removing these solutions from $\cM_a(f)$, we pursue with the resulting set, which we denote by $\cM'_a(f)$. Thus, assuming that $x\neq 0$ and multiply the equation by $x^3$, we obtain:
\begin{equation}\label{eq:a1-a2-eq1}
\cM'_a(f)=\{(x,y)\in\bR^2\mid x^2y^2(4xy-9)(xy-xa_2)=6x^3(x-a_1)(xy -1)(xy-2)\}.
\end{equation}
We show that we can find solutions $(x_k,y_k)_{k\in \bN}$ of the equality in \eqref{eq:a1-a2-eq1} such that $\|(x_k,y_k)\|\to\infty$ and  $f(x_k,y_k)\to 0$.  Indeed, 
setting $z:= xy$ our equation  \eqref{eq:a1-a2-eq1} becomes $z^2(4z-9)(z-a_2 x)=6x^3(x-a_1)(z -1)(z-2)$. We then consider each side as a curve of variable $z$ with $x$ as parameter.  We consider the graphs of these two curves 
and observe that for each sign of $a_{2}$ the two graphs intersect at least once for any fixed and large enough $|x|$
and that this happens at some value of $z$ in the interval $]0,1[$ (and  in the interval $]1,2[$ in case $a_{2}=0$, respectively). This shows that we can find solutions $(x_k,y_k)\in \cM_a(f)$ with modulus tending to infinity and, since $z_{k} = x_{k}y_{k}$ is bounded and $y_{k}$ tends to 0, we get that $f(x_k,y_k)\to 0$.

In conclusion, we have shown that $\cS_{\infty}(f) = \{ 0\}$, which implies $\Bi(f) \not= \cS_{\infty}(f)$.

\subsection{Generic dimension of the nonsingular part of the Milnor set}\ 

The following statement has been noticed in case $p=1$ in \cite{HP} (see also \cite[Lemma 2.2]{Dut} or \cite{DT}). We outline the  proof in case $p> 1$, some details of which will be used in \S \ref{s:detec-S}.

\begin{lemma}\label{l:gen-a} Let $f=(f_1,\ldots,f_p)\colon\bR^n\to\bR^p$ be a polynomial map, where $n>p$ and $\deg f_i\leq d, \forall i$. There exists an open dense subset $\Omega_f\subset \bR^n$ such that, for every $a\in \Omega_f$,  the set $\mathcal{M}_a(f)\setminus\Sing f$ is either  a smooth manifold of dimension $p$, or it is empty. 
\end{lemma}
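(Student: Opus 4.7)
My plan is to realize $\mathcal{M}_a(f)\setminus \Sing f$ as a fiber of a projection amenable to a Sard-type argument. First observe that, at any $x\in \bR^n\setminus \Sing f$, the differential $Df(x)$ already has full rank $p$, so the $(p+1)\times n$ Jacobian matrix of $(f,\rho_a)$ drops rank at $x$ if and only if the vector $x-a$ lies in the row span of $Df(x)$; equivalently, there exist unique multipliers $\lambda_1,\ldots,\lambda_p\in\bR$ with
$$x-a=\sum_{i=1}^p \lambda_i\nabla f_i(x).$$

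I would then introduce the incidence variety
$$V:=\Bigl\{(x,\lambda,a)\in (\bR^n\setminus \Sing f)\times \bR^p\times \bR^n \ \Big|\ x-a-\sum_{i=1}^p \lambda_i\nabla f_i(x)=0\Bigr\}.$$
Its defining map $G(x,\lambda,a)=x-a-\sum_i \lambda_i\nabla f_i(x)$ has partial derivative $-I_n$ with respect to $a$, hence it is a submersion, and $V$ is therefore a smooth semi-algebraic manifold of dimension $n+p$.

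The next step is to apply the semi-algebraic version of Sard's theorem to the projection $\pi:V\to \bR^n$, $(x,\lambda,a)\mapsto a$. The set of critical values of $\pi$ is then a semi-algebraic subset of $\bR^n$ of dimension strictly less than $n$, so the interior of its complement is an open dense semi-algebraic set $\Omega_f\subset \bR^n$. For every $a\in \Omega_f$ the fiber $\pi^{-1}(a)$ is either empty or a smooth submanifold of $V$ of dimension $(n+p)-n=p$.

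It remains to identify $\pi^{-1}(a)$ with $\mathcal{M}_a(f)\setminus \Sing f$ in a diffeomorphic way. The obvious projection $(x,\lambda)\mapsto x$ accomplishes this: since $\nabla f_1(x),\ldots,\nabla f_p(x)$ are linearly independent on $\bR^n\setminus \Sing f$, the multipliers $\lambda_i$ are uniquely and smoothly recovered from $x$ via the Moore--Penrose left-inverse of the transpose of $Df(x)$. The delicate point in the argument is not the differential-geometric part but the upgrade from the measure-zero conclusion of Sard's theorem to \emph{openness} of $\Omega_f$; this is precisely where one must exploit the semi-algebraic nature of $V$ and $\pi$.
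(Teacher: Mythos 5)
Your argument is correct, and at bottom it follows the same strategy as the paper: build an incidence variety over the space of centers $a$, check it is smooth of dimension $n+p$ by differentiating in the $a$-direction, and apply Sard's theorem to the projection onto $a$. The implementations differ, though. The paper stays in $(x,a)$-space and cuts out $Z=\{(x,a): x\in\mathcal{M}_a(f)\setminus\Sing f\}$ locally by the $n-p$ bordered $(p+1)\times(p+1)$ minors $m_J(x,a)$ of $\rD(f,\rho_a)$ that contain a nonvanishing $p\times p$ minor $M_I[\rD f(x)]$; submersivity is then the observation that $(\partial m_J/\partial a_k)_{k\notin I}$ is, up to reordering, diagonal with entries $-M_I[\rD f(x)]$. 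You instead pass to $(x,\lambda,a)$-space with Lagrange multipliers, where the single global equation $G=0$ has $a$-derivative $-I_n$, making the submersion check immediate at the cost of the extra (routine, and correctly handled) identification of $\pi^{-1}(a)$ with $\mathcal{M}_a(f)\setminus\Sing f$ through the smoothly recovered multipliers. Both routes prove the lemma; two remarks on the trade-off. First, the paper's choice of equations is not incidental: the minors $m_J$, of degree at most $p(d-1)+1$, are reused verbatim in the proof of Theorem \ref{l:main4} to bound the degree of the curve given by the effective curve selection lemma, so the multiplier formulation, while cleaner here, would not feed directly into that degree count (your equations have degree $d$ but live in $n+p$ variables and would require eliminating $\lambda$). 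Second, your closing point is well taken: the paper's proof literally concludes only ``for almost all $a$'', and the promotion to an \emph{open dense} $\Omega_f$ does rest on the semi-algebraic Sard theorem (the critical values of $\tau$, resp.\ $\pi$, form a semi-algebraic set of dimension $<n$, whose closure one removes), exactly as you indicate.
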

\begin{proof} 
We denote by $M_{I}[\rD (f)(x)]$ (respectively $M_{I}[\rD (f,\rho_a)(x)]$) the minor of the Jacobian matrix $\rD (f)(x)$ (respectively $\rD (f,\rho_a)(x)$) indexed by the multi-index $I$. We set 
\begin{equation}
Z:=\{(x,a)\in\bR^n\times\bR^n\mid x\in \mathcal{M}_a(f)\setminus\Sing f\}.
\end{equation}
If $Z=\emptyset$, then  $\mathcal{M}_a(f)\setminus\Sing f = \emptyset, \forall a\in\bR^n$. From now on let us consider the case that $Z\neq\emptyset$. 
Let $(x_0,a_0)\in Z$. Since $\Sing f$ is closed, there is a neighborhood   $U\subset \bR^n$ of $x_0$ such that $U\cap\Sing f=\emptyset$. This means that there exists a multi-index $I=(i_1,\ldots,i_p)$ of size $p$, $1\leq i_1 < \ldots <i_p\leq n$, such that $M_I[\rD f(x)]\neq 0$, $\forall x\in U$. 

Let $S_I:=\{J=(j_1,\ldots,j_{p+1})\mid  I\subset J\}$ be the set of multi-indices of size $p+1$ such that $1\leq j_1 <\ldots < j_{p+1}\leq n$ and $i_1,\ldots,i_p\in \{j_1,\ldots,j_{p+1}\}$.  There are $(n-p)$ multi-indices $J\in S_I$; we set
\begin{equation}\label{eq:m-j}
m_J(x,a):=M_{J}[\rD (f,\rho_a)(x)], (x,a) \in U\times \bR^n. 
\end{equation} 
From the definitions of $Z, U$ and the functions $m_J$, we have: 
\begin{equation}\label{eq:m-j-1}
Z\cap(U\times \bR^n)=\{(x,a)\in U\times\bR^n\mid m_J(x,a)=0; \forall J\in S_I\}.
\end{equation}
Let  $\varphi\colon U\times \bR^n\to\bR^{n-p}$ be the map consisting of the functions $m_{J}$ for $J\in S_I$. Then $\varphi^{-1}(0)=Z\cap (U\times\bR^n)$ and we notice that $\rD\varphi(x,a)$ has rank $(n-p)$ at any  $(x,a)\in U\times\bR^n$. Indeed, let 
\[\left( \frac{\partial \varphi}{\partial a_k} (x,a) \right)_{(n-p)\times(n-p)}, \, k\notin I, (x,a)\in U\times \bR^n.\]
This is a minor of $\rD\varphi(x,a)$ of size $(n-p)$. Interchanging if necessary the order of its lines, it is a diagonal matrix with all the entries on the diagonal equal to  $- M_I[\rD f(x)] $ and hence non-zero. This and \eqref{eq:m-j-1} show that $Z$ is a manifold of dimension $n+p$. 

We next consider the projection $\tau\colon Z\to\bR^n$, $\tau(x,a)=a$. Thus, $\tau^{-1}(a)=(\mathcal{M}_a(f)\setminus\Sing f)\times\{a\}$. By Sard's Theorem, we conclude that,  for almost all $a\in\bR^n$, $\tau^{-1}(a) = (\mathcal{M}_a(f)\setminus\Sing f)\times\{a\}\cong (\mathcal{M}_a(f)\setminus\Sing f)$ is either a smooth manifold of dimension $p$ or  an empty set. 
\end{proof}

\subsection{The relation to the Malgrange-Rabier condition}\label{ss:mal-ga-ra}

\begin{definition}[\cite{Ra}]\label{d:ra-ity}
Let $f\colon\bR^n\to\bR^p$ be a polynomial map, $n\geq p$. Denote by $\rD f(x)$ the Jacobian matrix of $f$ at $x$. We consider
\begin{eqnarray}\label{eq:ra-ity} \cK_{\ity}(f) & := & \{t\in\bR^{p}\mid \exists \{ x_{j}\}_{j\in \bN} \subset \bR^{n}, \lim_{j\to\ity}\|
x_j\|=\ity, \\ \nonumber
 &  &  \lim_{j\to\ity}f(x_j)= t \mathrm{\ and\ }\lim_{j\to\ity}\|x_j\|\nu(\rD f (x_j))=0\},
\end{eqnarray}
where 
\begin{equation}\label{eq:ra-func}
\nu(A):=\inf_{\|y\|=1}\|A^*(y)\|,
\end{equation} 
for a linear map $A$ and its adjoint $A^*$. 

We call the set $\cK_{\ity}(f)$ of \emph{asymptotic critical values of $f$}. If $t_0\notin \cK_{\ity}(f)$ we say that $f$ verifies the Malgrange-Rabier condition at $t_0$.
\end{definition}


We have the following relation between $\rho_a$-regularity and Malgrange-Rabier condition:

\begin{theorem}[{\cite[Th. 2.8]{DT}}]\label{p:rho-rab}
 Let $f=(f_1,\ldots,f_p)\colon\bR^n\to\bR^p$ be a polynomial  map, where $n> p$. Let $\phi : ]0,\varepsilon [ \to \mathcal{M}_a(f)\subset \bR^n$ be an analytic path such that  $\lim_{t\to 0}\|\phi(t)\|=\infty$ and $\lim_{t\to 0}f(\phi(t))= \mathrm{c}$. Then $\lim_{t\to 0}\|\phi(t)\| \nu(\rD f (\phi(t)))=0$.
In particular
 $\cS_a(f)\subset \cK_{\infty}(f)$ for any $a\in\bR^n$, and $\Si (f)\subset \cK_{\infty}(f)$.
\fin
\end{theorem}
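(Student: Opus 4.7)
The plan is to derive a differential relation on $\phi$ coming from the critical set condition, then compare leading orders in Puiseux expansions to dominate $\nu(\rD f(\phi))$.

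First I would reduce to the case $\phi(t)\notin\Sing f$ for $t$ near $0$. By analyticity of $\phi$ and of the minors of $\rD f\circ\phi$, either $\phi$ lies in $\Sing f$ identically (in which case $\nu(\rD f(\phi))\equiv 0$ and we are done), or only for isolated values of $t$, which may be discarded. On the complement of $\Sing f$, the condition $\phi(t)\in\mathcal{M}_a(f)$ forces $\grad \rho_a(\phi(t))=2(\phi(t)-a)$ to lie in the span of $\grad f_1(\phi(t)),\dots,\grad f_p(\phi(t))$. Since $\phi$ is real analytic and the $\grad f_i(\phi(t))$ are linearly independent over the field of meromorphic (Puiseux) functions of $t$, Cramer's rule produces coefficients $\lambda_1(t),\dots,\lambda_p(t)$, expandable as Puiseux series in $t$, with
\[
2(\phi(t)-a)\;=\;\sum_{i=1}^{p}\lambda_i(t)\,\grad f_i(\phi(t)).
\]
Testing the definition \eqref{eq:ra-func} with the unit vector $y=\lambda(t)/\|\lambda(t)\|\in\bR^p$ yields the fundamental bound
\[
\nu(\rD f(\phi(t)))\;\le\;\frac{2\,\|\phi(t)-a\|}{\|\lambda(t)\|}.
\]

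Next I would estimate $\|\lambda(t)\|$ from below by differentiating the composed functions. From the identity above,
\[
\tfrac{d}{dt}\rho_a(\phi(t))\;=\;\bigl\langle\grad\rho_a(\phi(t)),\dot\phi(t)\bigr\rangle\;=\;\sum_{i=1}^{p}\lambda_i(t)\,\tfrac{d}{dt}f_i(\phi(t)).
\]
Since $f_i\circ\phi\to c_i$, the Puiseux order $\ord_t(f_i\circ\phi-c_i)$ is strictly positive, so $\ord_t\bigl(\tfrac{d}{dt}f_i(\phi(t))\bigr)>-1$. On the other hand, $\rho_a(\phi(t))=\|\phi(t)-a\|^2\to\infty$, so there exists $\alpha>0$ with $\ord_t(\rho_a\circ\phi)=-\alpha$ and hence $\ord_t\bigl(\tfrac{d}{dt}\rho_a(\phi(t))\bigr)=-\alpha-1$. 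Matching leading orders in the identity above forces, for at least one index $i_0$,
\[
\ord_t(\lambda_{i_0})\;=\;-\alpha-1-\ord_t\bigl(\tfrac{d}{dt}f_{i_0}(\phi(t))\bigr)\;<\;-\alpha.
\]

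Finally I would conclude. Combining the two estimates,
\[
\|\phi(t)\|\,\nu(\rD f(\phi(t)))\;\le\;\frac{2\,\|\phi(t)\|\,\|\phi(t)-a\|}{\|\lambda(t)\|}\;\sim\;\frac{2\,\|\phi(t)\|^{2}}{\|\lambda(t)\|},
\]
because $\|\phi(t)-a\|/\|\phi(t)\|\to 1$ as $\|\phi(t)\|\to\infty$. Since $\ord_t(\|\phi\|^2)=\ord_t(\rho_a\circ\phi)=-\alpha$ while $\ord_t(\|\lambda\|)\le\ord_t(\lambda_{i_0})<-\alpha$, the ratio $\|\phi\|^2/\|\lambda\|$ has strictly positive $t$-order and thus tends to $0$. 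This proves the main statement. For the ``in particular'' part, given $t_0\in\cS_a(f)$, the set $\mathcal{M}_a(f)\cap f^{-1}(V)\setminus\overline{B_R(0)}$ is semi-algebraic and non-empty for every neighborhood $V$ of $t_0$ and every $R>0$, so the Curve Selection Lemma at infinity produces an analytic path $\phi$ in $\mathcal{M}_a(f)$ along which $\|\phi(t)\|\to\infty$ and $f(\phi(t))\to t_0$; the main statement then gives $t_0\in\cK_\infty(f)$, and intersecting over $a\in\bR^n$ yields $\Si(f)\subset\cK_\infty(f)$.

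The main obstacle is the Puiseux-analyticity of the multipliers $\lambda_i$ and the order-matching step: one must verify that the $\grad f_i(\phi(t))$ remain linearly independent as $\bR\{\!\{t\}\!\}$-vectors away from $\Sing f$ (so Cramer's rule legitimately yields Puiseux-meromorphic $\lambda_i$), and that the leading-order balance in the differentiated identity cannot be destroyed by accidental cancellations among the $\lambda_i \,\tfrac{d}{dt}f_i(\phi)$ terms (it cannot, because the left-hand side has a strictly more negative order than any individual $\tfrac{d}{dt}f_i(\phi)$, so the maximum of $\ord_t(\lambda_i)+\ord_t(\tfrac{d}{dt}f_i(\phi))$ must equal $-\alpha-1$).
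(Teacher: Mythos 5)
The paper does not prove this statement itself (it imports it from \cite[Th.~2.8]{DT}, hence the \fin right after the statement), and your argument reconstructs essentially the proof given there: on $\mathcal{M}_a(f)\setminus \Sing f$ write $\grad\rho_a(\phi)=\sum_i\lambda_i\grad f_i(\phi)$, test $\nu$ against $\lambda/\|\lambda\|$ to get $\nu(\rD f(\phi))\le 2\|\phi-a\|/\|\lambda\|$, and bound $\|\lambda\|$ from below by comparing Puiseux orders in $\frac{\d}{\d t}\rho_a(\phi)=\sum_i\lambda_i\frac{\d}{\d t}(f_i\circ\phi)$. The argument is correct; the only small imprecisions are that the order matching yields $\min_i\bigl(\ord_t\lambda_i+\ord_t\frac{\d}{\d t}(f_i\circ\phi)\bigr)\le-\alpha-1$ rather than an equality (the inequality is all you use), and that the whole scheme tacitly assumes $\phi$ admits a Puiseux expansion at $t=0$, which is the intended meaning of ``analytic path'' here and is what the Curve Selection Lemma at infinity delivers for the ``in particular'' part.
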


\begin{remark}\label{r:1}
 See \cite{DRT} and more precisely \cite[Theorem 2.5]{DT} for a structure result and a fibration result on $\Si (f)$. The inclusion $\Si (f)\subset \cK_{\infty}(f)$ may be strict (e.g. \cite{PZ} and \cite[Example 2.9]{DT}).
 The inclusion $B_{\infty}(f)\subset  \cS_{\infty}(f)$ may be strict, see the above Example \S \ref{ss:example}.
 One may also have $\cS_a(f)\neq \cS_b(f)$ for some $a\neq b$, see  \cite[Example 2.10]{DT}.
\end{remark}

\subsection{The nontrivial bifurcation locus at infinity}  \label{ss:nontriv}
 We have discussed up to now three types of bifurcation loci: $\Bi(f)$, $\Si(f)$ and $\cK_{\ity}(f)$.
 All of them may contain points of the critical locus $f(\Sing f)$. This locus can be estimated separately since it is the image by $f$ of an algebraic set and the known estimation methods apply. What is more difficult to apprehend are the respective complements of $f(\Sing f)$. We define here the ``nontrivial parts''
of the bifurcation loci and next describe a procedure to estimate the one of $\Si(f)$.

From the definitions of $\mathcal{M}_a(f)$ and $\cS_a(f)$, we have the equality $\cS_a(f) = J(f_{|\mathcal{M}_a(f)})$, where  $J(f_{|\mathcal{M}_a(f)})$ is the \emph{non-properness set} of $f_{|\mathcal{M}_a(f)}$. Jelonek defined this set in general: 

  \begin{definition}\label{d:jelonek}(\cite[Definition 3.3]{Je-mathann}, \cite{JK}).
 Let $g : M \to N$ be a continuous map, where $M,N$
are topological spaces. One says  that $g$ is \emph{proper at the value $t\in N$} if there exists an open neighbourhood
$U\subset N$ of $t$ such that the restriction $g_{|g^{-1}(U)} : g^{-1}(U)\to U$ is a proper map. We denote by $J(g)$ the set of points at which $g$ is not proper.
\end{definition}

\medskip

In our setting $f\colon\bR^n\to\bR^p$,  let us define the \emph{nontrivial $\rho$-bifurcation set at infinity}
$N\Si(f) := \bigcap_{a\in \bR^n} N\cS_a(f)$,
where: 
\[ N\cS_a(f) := \{t\in\bR^{p}   \mid \exists \{ x_{j}\}_{j\in \bN} \subset \mathcal{M}_a(f)\setminus \Sing f,  \lim_{j\to\ity}\|
x_j\|=\ity,  \nonumber
\mathrm{\ and\ }  \lim_{j\to\ity}f(x_j)= t  \}\]
and note that
$\Si(f)  = N\Si(f) \cup J(f_{|\Sing f})$
and that $N\Si(f)$ is a \emph{closed set} since  each set $N\cS_a(f)$ 
is closed, which fact follows from the arguments of \cite[Theorem 5.7(a)]{DRT}.

Similarly, we introduce the following notation for the \emph{nontrivial bifurcation set at infinity} which is the object of our main result, Theorem \ref{t:main2}:

\begin{eqnarray}\label{eq:nontrivB} 
N\Bi(f) :=  \Bi(f)\setminus J(f_{|\Sing f}).
\end{eqnarray}

By the above definitions and by Theorem \ref{p:rho-rab}, we immediately get:

\begin{proposition}\label{p:inclusion}
 \[ N\Bi(f) \subset N\Si(f)\subset \cK_{\ity}(f).
  \]
 \fin
\end{proposition}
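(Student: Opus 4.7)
The plan is to establish the two inclusions separately, both following essentially from the definitions combined with the already recorded result that $\cS_a(f)\subset \cK_{\ity}(f)$ from Theorem \ref{p:rho-rab} and with the inclusion $\Bi(f)\subset \cS_a(f)$ for every $a\in\bR^n$ noted in the discussion preceding \eqref{eq:bif}.

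For the right-hand inclusion $N\Si(f)\subset \cK_{\ity}(f)$, I would first observe that $N\cS_a(f)\subset \cS_a(f)$ directly from the definitions, because the former restricts the admissible sequences to $\mathcal{M}_a(f)\setminus \Sing f\subset \mathcal{M}_a(f)$. Intersecting over all $a\in\bR^n$ gives $N\Si(f)\subset \Si(f)$, and Theorem \ref{p:rho-rab} yields $\Si(f)\subset \cK_{\ity}(f)$, which chains to the desired inclusion.

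For the left-hand inclusion $N\Bi(f)\subset N\Si(f)$, I would fix $t\in N\Bi(f)=\Bi(f)\setminus J(f_{|\Sing f})$ and an arbitrary $a\in\bR^n$, and aim to produce a sequence in $\mathcal{M}_a(f)\setminus \Sing f$ witnessing $t\in N\cS_a(f)$. Since $\Bi(f)\subset \cS_a(f)$, there exists a sequence $\{x_j\}\subset \mathcal{M}_a(f)$ with $\|x_j\|\to\ity$ and $f(x_j)\to t$. Here I would use the basic but crucial inclusion $\Sing f\subset \mathcal{M}_a(f)$, which holds because any $x\in \Sing f$ satisfies $\rk \rD f(x)<p$ and hence $\rk \rD(f,\rho_a)(x)<p+1$. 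This allows the dichotomy: either infinitely many $x_j$ lie in $\mathcal{M}_a(f)\setminus \Sing f$, in which case the corresponding subsequence shows $t\in N\cS_a(f)$; or, from some index onwards, all $x_j$ lie in $\Sing f$, in which case the sequence $\{x_j\}\subset \Sing f$ with $\|x_j\|\to\ity$ and $f(x_j)\to t$ forces $t\in J(f_{|\Sing f})$, contradicting the hypothesis $t\in N\Bi(f)$. Only the first alternative survives, so $t\in N\cS_a(f)$; since $a$ was arbitrary, $t\in \bigcap_{a\in\bR^n}N\cS_a(f)=N\Si(f)$.

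No real obstacle is expected: the argument is essentially a bookkeeping exercise once one isolates the inclusion $\Sing f\subset \mathcal{M}_a(f)$ and exploits the definition of $J(\cdot)$ via non-properness. The only point deserving a line of justification is the choice of subsequence in the dichotomy, and the mild observation that both $N\cS_a(f)$ and $\cS_a(f)$ are defined so that extracting a subsequence preserves membership.
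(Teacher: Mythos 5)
Your proof is correct and follows essentially the same route as the paper, which states the proposition as immediate from the definitions (in particular the decomposition $\Si(f)=N\Si(f)\cup J(f_{|\Sing f})$ and the inclusion $\Bi(f)\subset\cS_a(f)$) together with Theorem \ref{p:rho-rab}. Your dichotomy argument is exactly the bookkeeping the paper leaves implicit, and the observation $\Sing f\subset\mathcal{M}_a(f)$, while not strictly needed for the subsequence dichotomy, is harmless.
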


\begin{remark} If $f$ has a compact singular set $\Sing f$ or, more generally, if $J(f_{|\Sing f}) = \emptyset$, then $N\Si(f) = \Si(f)$,  and $N\Bi(f) = \Bi(f)$.  However these equalities mai fail whenever $J(f_{|\Sing f}) \not= \emptyset$.

In this matter, let us point out here that the proofs of \cite[Proposition 3.1, Theorem 3.4]{DT}  run actually for the set $N\Si(f)$; one therefore needs to replace $\Si(f)$ by $N\Si(f)$ in the statements of those results. 
 
\end{remark}

\medskip

\section{Detection of bifurcation values at infinity by parametrized curves}\label{s:detec-S}



\subsection{Effective Curve Selection Lemma at infinity via the Milnor set} \ 

If $t_0 \in N\Si(f)$ then $t_0 \in N\cS_a(f)$ for any $a\in \bR^n$ and in particular for  $a\in \Omega_f$, where $\Omega_f$ is as in Lemma \ref{l:gen-a}. 

\begin{theorem}\label{l:main4}
Let $f=(f_1,\ldots,f_p)\colon\bR^n\to\bR^p$ be a polynomial mapping such that $\deg f_i\leq d, \forall i = 1, \ldots, p$, and $n>p$. Let $t_0 \in N\cS_a(f)$ for some $a\in \Omega_f$. Then there exists  an analytic path: 
\begin{equation}\label{eq:curveselection}
x(t)=\sum_{-\ity\leq i\leq s} a_it^i,
\end{equation}
with 
\[ s \le  [p(d-1)+1]^{n-p} [p(d-1)(n-p) +2]^{p-1}\]
 and such that: 
\begin{enumerate}
{\rm\item $x(t)\in\mathcal{M}_a(f)\setminus \Sing f$,   for  any $t\ge R$, for some large enough $R\in \bR_{+}$;
\item $\|x(t)\|\to \infty,$} as $t\to\infty ;$
{\rm\item $f(x(t))\to t_0,$} as $t\to\infty$.
\end{enumerate}
\end{theorem}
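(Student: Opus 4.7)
The plan is to combine a standard Curve Selection Lemma at infinity with iterated Bezout estimates on the Milnor set. The existence part produces an analytic arc with the three required properties; the quantitative part converts the degree of the underlying algebraic sets into the bound on the leading exponent $s$.

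\textbf{Step 1 (existence of the arc).} Fix $a\in\Omega_f$. Since $t_0\in N\cS_a(f)$, choose a sequence $\{x_k\}\subset \mathcal{M}_a(f)\setminus \Sing f$ with $\|x_k\|\to\infty$ and $f(x_k)\to t_0$. Form the semi-algebraic set
\[
Y:=\{(x,u)\in (\mathcal{M}_a(f)\setminus\Sing f)\times (0,\infty) \mid u\|x\|=1\},
\]
whose image under $(x,u)\mapsto (u,f(x))$ accumulates at $(0,t_0)$. The semi-algebraic Curve Selection Lemma (cf.\ \cite{Bochnak}) yields a Nash arc $\phi:(0,\varepsilon)\to \mathcal{M}_a(f)\setminus \Sing f$ with $\|\phi(\tau)\|\to\infty$ and $f(\phi(\tau))\to t_0$ as $\tau\to 0^+$. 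Being Nash, $\phi$ has a convergent Puiseux expansion; the reparametrization $\tau=t^{-N}$ for a suitable $N\in\bN$ converts it to a Laurent series $x(t)=\sum_{i\le s}a_it^i$ with $s\geq 1$, thereby verifying (a)--(c).

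\textbf{Step 2 (bound on $s$).} The leading exponent $s$ equals the degree of the rational map $t\mapsto x(t):\bP^1\to \bP^n$, and hence bounds the degree of the projective image curve $C\subset \bP^n$. This curve lies in the projective closure of the algebraic subset $V$ cut out, near the arc, by the $(n-p)$ minors $m_J=0$ from \eqref{eq:m-j}: each $m_J$ has degree at most $p(d-1)+1$, with $p(d-1)$ coming from the gradients $\grad f_i$ and $+1$ from $\grad \rho_a$. Iterated Bezout gives $\deg V\leq [p(d-1)+1]^{n-p}$. Because $\dim V=p$ (Lemma \ref{l:gen-a}), the curve $C$ is further pinned inside $V$ by $p-1$ auxiliary hypersurfaces obtained from the constraint $f(x(t))\to t_0$, transported into algebraic equations on $V$ of degree at most $p(d-1)(n-p)+2$ (essentially products of the relations $f_i-(t_0)_i$ with minor-derived cofactors matched to the ambient degree on the projective closure of $V$). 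A further application of iterated Bezout multiplies the degree bound by $[p(d-1)(n-p)+2]^{p-1}$, so that $s=\deg C\leq [p(d-1)+1]^{n-p}[p(d-1)(n-p)+2]^{p-1}$.

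\textbf{Main obstacle.} The non-routine part is the explicit construction of the $p-1$ auxiliary hypersurfaces of degree exactly $p(d-1)(n-p)+2$ in Step 2: they must simultaneously contain the Puiseux arc, reduce $V$ to a one-dimensional scheme (with the arc in the right irreducible component), and have degree matching the claimed bound. The most plausible source is generic combinations of $f_i-(t_0)_i$ multiplied by cofactors derived from the minors $m_J$ (or from $\rho_a$), chosen so that the resulting equation vanishes along the arc while its degree on the compactified Milnor set is controlled. Once these hypersurfaces are in place, iterated Bezout closes the bound on $s$ and the theorem follows.
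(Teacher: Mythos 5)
Your Step 1 and the skeleton of Step 2 (the $(n-p)$ minors of degree at most $p(d-1)+1$, iterated Bezout) match the paper's setup, but the argument has a genuine gap exactly where you flag it, and your proposed way of filling it would not work. The $p-1$ auxiliary equations cannot come from the constraint $f(x(t))\to t_0$: the arc satisfies $f(x(t))\to t_0$ only asymptotically, not $f(x(t))=t_0$, so no hypersurface built from $f_i-(t_0)_i$ contains the arc, and "minor-derived cofactors" do not repair this. What the paper actually does is a polar-curve construction in the spirit of Milnor, following Jelonek--Kurdyka: working in an affine chart at infinity around a point $w=(\underline{x},t_0)$ in the closure of the graph of $f_{|\mathcal{M}_a(f)\setminus\Sing f}$, one takes the distance function $\rho_w$ to $w$ and an auxiliary quadratic function $h_l=x_0 l$ for a generic linear $l$, and considers the critical locus of $(\rho_w,h_l)$ restricted to the closure of the Milnor set. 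This locus is cut out by $g_1=\cdots=g_{n-p}=0$ together with $\d g_1\wedge\cdots\wedge \d g_{n-p}\wedge \d\rho_w\wedge \d h_l=0$; the latter is the vanishing of $p-1$ minors of degree at most $(n-p)\,p(d-1)+1+1=p(d-1)(n-p)+2$, which is where that degree really comes from. Crucially, the Morse-theoretic part of the Jelonek--Kurdyka argument guarantees that this algebraic curve has a branch reaching $w$, i.e.\ a branch on which $\|x\|\to\infty$ and $f(x)\to t_0$; this is what replaces your Step 1.

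There is a second, related flaw in your order of operations: an arbitrary Nash arc produced by the ordinary Curve Selection Lemma inside the $p$-dimensional set $\mathcal{M}_a(f)\setminus\Sing f$ need not lie on any algebraic curve of controlled degree, and its leading Puiseux exponent is not the degree of a rational map (the arc need not be rational at all). One must first exhibit an \emph{algebraic} curve of bounded degree containing a suitable branch, and only then invoke the effective Curve Selection Lemma of Jelonek--Kurdyka (\cite[Lemmas 3.1 and 3.2]{JK}), which converts a bound $\delta$ on the degree of the curve into a parametrization with leading exponent $s\le\delta$. So the quantitative conclusion cannot be retrofitted onto an arc obtained by the qualitative CSL; the construction of the low-degree polar curve has to come first.
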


\begin{proof} 
 The case $p=1$ is \cite[Theorem 3.4]{DT}. We assume in the following that $p>1$. 

From Lemma \ref{l:gen-a} we have  that $\mathcal{M}_a(f)\setminus\Sing f$  is a smooth semi-algebraic set of dimension $p$ since non-empty by our hypothesis on $t_{0}$.  From the proof of Lemma \ref{l:gen-a} the set 
 $\mathcal{M}_a(f)\setminus \Sing f$ is locally a complete intersection defined by $(n-p)$ equations, each of which is of degree at most $p(d-1)+1$.  So let us denote by $g_{1}, \ldots , g_{n-p}$ these functions.

We use coordinates $(x_1,\ldots,x_n)$ for the affine space $\bR^n$ and coordinates $[x_0:x_1:\ldots:x_n]$ for the projective space $\bP^n$. We identify the affine space $\bR^n$ with the chart $\{x_0\neq 0\}$ of $\bP^n$. Let $\bX=\overline{\graph f}$ be the closure of the graph of $f$ in $\bP^n\times \bR^p$ and let $\bX^\ity$ the intersection of $\bX$ with the hyperplane at infinity $\{ x_0=0\}$. Let  $i : \bR^n \to \bX\subset \bP^n \times \bR^p$, $x\mapsto (x, f(x))$ be the graph embedding. Consider the closure in $\bX$ of the image $i(\mathcal{M}_a(f)\setminus \Sing f)$ and denote it (abusively) by $\overline{\mathcal{M}_a(f)\setminus \Sing f}$.

Let  then $w:=(\underline{x},t_0)\in \overline{\mathcal{M}_a(f)\setminus \Sing f} \cap \bX^\ity$. We shall work in some 
affine chart $U \simeq \bR^n$ at infinity of $\bP^n$ assuming (without loss of generality) that the point $\underline{x}$ is the origin.  We may then use an ``effective curve selection lemma'' to show that there is a curve $\Gamma \subset \mathcal{M}_a(f)\setminus \Sing f$ such that $w\in \overline{\Gamma}$  and that this curve has a one-sided bounded parametrization. To do so, we combine Milnor's basic construction in \cite{Mi} with the idea of Jelonek and Kurdyka given in \cite[Lemma 6.4]{JK}. 

  Namely we consider small enough spheres centered at $w\in U$ of equation $\rho_w = \beta$ and a function $h_{l} := x_{0}l$, for some linear function $l$ in the local coordinates. One can then prove like in \cite[Lemma 6.4]{JK} (where an apparently more particular situation was considered, but the proof works as well) that, for a general such linear function $l$, the set of critical points of the map $(\rho_{w}, h_{l}) : U \cap \overline{\mathcal{M}_a(f)\setminus \Sing f}\to \bR_{+}\times \bR$
 is an analytic  curve and its branches are the singular points of the restrictions of the quadratic function $h_{l}$ to the levels $\{\rho_{w} = \beta \} \cap \mathcal{M}_a(f)\setminus \Sing f$. It is shown in \cite[Lemmas 6.5 and 6.6]{JK} that these singular points are all Morse for a generic choice of $l$, and that there is at least one Morse point on each level, for small enough $\beta >0$.

 Let us then consider a branch of this analytic curve as our $x(t)$.  By its definition, this curve is a solution of the following  system of equations: $g_1=0, \ldots , g_{n-p} =0$ and $\d g_1 \wedge  \cdots  \wedge \d g_{n-p} \wedge \d \rho_w \wedge \d h_l =0$,
 the first of which are of degree at most $p(d-1)+1$ and the last one means the annulation of $p-1$ minors of degree  at most
 $p(d-1)(n-p)+2$.  Thus our algebraic set of solutions has degree $\delta$ verifying the inequality:
  \[  \delta \le [p(d-1)+1]^{n-p} [p(d-1)(n-p) +2]^{p-1}. \]

 Finally, by using the effective Curve Selection Lemma of Jelonek and Kurdyka \cite[Lemma 3.1 and Lemma 3.2]{JK} which says that there exists a parametrization of our curve $x(t)$ bounded by the degree $\delta$ of the curve,  we get exactly an expansion like \eqref{eq:curveselection}.   
This finishes the proof of our theorem. 
\end{proof}
 
\subsection{Finite length expansion for curves detecting asymptotically critical values}\
 We need a preliminary result which follows by applying \cite[Lemma 3.3]{JK} to each function $h_i$ in the following statement:
 
\begin{lemma}\label{l:main2}
 Let $h=  (h_1,\ldots,h_m) \colon\bR^k\to\bR^m$  be a polynomial map  and $\deg h_i\leq \tilde{d}, \forall i.$ Let $x(t)=\sum_{-\infty\leq i\leq s}a_it^i,$ where $t\in \bR$ , $a_i\in\bR^k$, $s > 0$ and that $\|x(t)\|\to\infty$ and $h(x(t))\to b.$ Then, for any $D\leq -\tilde{d}s+ s$, the truncated curve 
\[\tilde x(t)=\sum_{D\leq i\leq s}a_it^i,\]
verifies $\|\tilde x(t)\|\to\infty$ and $h(\tilde x(t))\to b$.
\fin
\end{lemma}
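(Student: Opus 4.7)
The plan is to reduce the vector-valued statement to its scalar counterpart. Since convergence in $\bR^m$ is equivalent to componentwise convergence, the hypothesis $h(x(t))\to b$ is the same as $h_i(x(t))\to b_i$ for each $i\in\{1,\ldots,m\}$, and similarly the conclusion $h(\tilde x(t))\to b$ is equivalent to $h_i(\tilde x(t))\to b_i$ for every $i$. Each $h_i$ is a single polynomial of degree at most $\tilde d$, so \cite[Lemma 3.3]{JK} applies directly to the pair $(h_i, x(t))$ and yields $h_i(\tilde x(t))\to b_i$ provided the truncation threshold satisfies $D\le -\tilde d s+s$. Crucially, this threshold depends only on $\tilde d$ and $s$, not on which component is being considered, so a single admissible $D$ serves all $i$ simultaneously.

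The second task is to verify that $\|\tilde x(t)\|\to\infty$. The truncation error $x(t)-\tilde x(t)=\sum_{i<D}a_i t^i$ consists exclusively of monomials of exponent $i<D$. Under the natural nondegenerate regime $\tilde d\ge 1$ and $s\ge 1$, we have $D\le s-\tilde d s\le 0$, so every term $a_i t^i$ with $i<D$ has strictly negative exponent. Consequently $\|x(t)-\tilde x(t)\|\to 0$ as $t\to\infty$, and combined with $\|x(t)\|\to\infty$ the triangle inequality gives $\|\tilde x(t)\|\to\infty$.

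Putting the two pieces together yields all three conclusions of the lemma. The only genuinely nontrivial ingredient is the scalar truncation estimate from \cite[Lemma 3.3]{JK}, which controls $h_i(x(t))-h_i(\tilde x(t))$ by a first-order Taylor-type bound of the form $O(t^{(\tilde d-1)s+D-1})$ and thereby explains the threshold $-\tilde d s+s$; once that is granted, the present lemma is a routine componentwise reassembly together with the observation above on the norm. No additional obstacle arises from the vector-valued formulation.
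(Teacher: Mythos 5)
Your proposal is correct and follows essentially the same route as the paper, which likewise disposes of the lemma by applying \cite[Lemma 3.3]{JK} componentwise to each $h_i$ (the paper states this in one sentence and gives no further detail). Your explicit verification that the discarded tail $\sum_{i<D}a_it^i$ has only negative exponents, hence tends to $0$ and preserves $\|\tilde x(t)\|\to\infty$, is a welcome addition that the paper leaves implicit.
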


If we try to replace $x(t)$ given in \eqref{eq:curveselection} by a truncated path, we may go out of the set $\mathcal{M}_a(f)\setminus \Sing f$. Bearing in mind the inclusion  $\cS_a(f)\subset \cK_{\infty}(f)$ of Theorem \ref{p:rho-rab}, instead of searching in vain a truncated expansion inside the Milnor set,  we may show that there exists a truncation which verifies the Malgrange-Rabier condition
\eqref{eq:ra-ity}.
The  proof of the following result  employs the technique of \cite[Theorem 3.2]{DRT} and \cite[Theorem 2.4.8]{Di}, where we have used the $t$-regularity to find a geometric interpretation for $\cK_{\infty}(f)$.

\begin{proposition}\label{p:main3} 
Let $f=(f_1,\ldots,f_p)\colon\bR^n\to\bR^p$ be a polynomial map such that $n>p$ and that $\deg f_i\leq d, \forall i.$ Let 
\[ x(t)=(x_1(t),\ldots,x_n(t))=\sum_{-\infty\leq i\leq s}a_it^i,\]
where $t\in\bR$, $a_i\in\bR^n$, $s > 0$ and such that: 
\begin{enumerate}
{\rm\item $\|x(t)\|\to\infty,$} as $t\to\infty$;
{\rm \item $f(x(t))\to b,$} as $t\to\infty$;
{\rm\item $\|x(t)\| \nu(\rD f (x(t)))\to 0,$} as $t\to \infty$.
\end{enumerate} 
Then the truncated expansion  
\[\tilde x(t)=\sum_{-ds\leq i\leq s}a_it^i,\]
  verifies the following conditions:
 \begin{itemize}
{\rm\item[(i)] $\|\tilde x(t)\|\to\infty,$} as $t\to\infty$;
{\rm\item[(ii)] $f(\tilde x(t))\to b,$}  as $t\to\infty$;
{\rm\item[(iii)] $\|\tilde x(t)\| \nu(\rD f (\tilde x(t)))\to 0,$} as $t\to \infty$.
\end{itemize}

\end{proposition}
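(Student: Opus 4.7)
Parts (i) and (ii) follow at once by applying Lemma~\ref{l:main2} with $h=f$ and $\tilde d = d$: the requirement $D \le -\tilde d s + s = -(d-1)s$ is satisfied by the choice $D = -ds$ (since $s > 0$), giving simultaneously $\|\tilde x(t)\|\to\infty$ and $f(\tilde x(t)) \to b$.

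For (iii) the strategy is a Taylor comparison of the Jacobian along $x(t)$ and along $\tilde x(t)$. Set $r(t) := x(t) - \tilde x(t) = \sum_{i \le -ds-1} a_i t^i$, so $\|r(t)\| = O(t^{-ds-1})$, and for each $t$ large choose a unit vector $y(t)\in\bR^p$ realising $\nu(\rD f(x(t))) = \|\rD f(x(t))^* y(t)\|$; hypothesis (3) then reads $\|x(t)\|\,\|\rD f(x(t))^* y(t)\| \to 0$. Since each $f_i$ has degree at most $d$, every second partial $\partial^2 f_i/\partial x_j\partial x_k$ is a polynomial of degree at most $d-2$, hence bounded by $O(t^{(d-2)s})$ along the segment between $\tilde x(t)$ and $x(t)$, which lies in a ball of radius $O(t^s)$. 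The componentwise mean value theorem then yields
\[
\|\rD f(x(t)) - \rD f(\tilde x(t))\| \;\le\; C\,\|r(t)\|\cdot O(t^{(d-2)s}) \;=\; O(t^{-2s-1}),
\]
where the case $d\le 1$ is trivial since then $\rD f$ is constant. Using $\|\tilde x(t)\| \le 2\|x(t)\|$ for $t$ large and bounding $\nu(\rD f(\tilde x(t))) \le \|\rD f(\tilde x(t))^* y(t)\|$ by the triangle inequality, one obtains
\[
\|\tilde x(t)\|\,\nu(\rD f(\tilde x(t))) \;\le\; 2\|x(t)\|\,\|\rD f(x(t))^* y(t)\| \;+\; O(t^s)\cdot O(t^{-2s-1}),
\]
and both summands tend to $0$ (the second because $s > 0$).

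The principal obstacle is the careful bookkeeping of leading exponents, in particular verifying that the specific truncation threshold $-ds$ leaves a strict margin (of size $t^{-s-1}$) once the Taylor error $O(t^{-2s-1})$ is multiplied by $\|\tilde x(t)\|=O(t^s)$. The approach alluded to in the paper, via the $t$-regularity at infinity of \cite{DRT} and \cite{Di}, would phrase (iii) as a tangency condition for the projective closure of the graph against the hyperplane at infinity; the analytic estimate above is essentially the local-chart version of that reformulation, and it bypasses the projective setup altogether.
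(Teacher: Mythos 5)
Your proof is correct, but part (iii) takes a genuinely different route from the paper. The paper first invokes the Curve Selection Lemma (using that $\nu$ is semi-algebraic) to produce an \emph{analytic} unit covector path $y(t)=\sum_{j\le 0}b_jt^j$ realising the infimum in $\nu(\rD f(x(t)))$ asymptotically, introduces the auxiliary polynomials $\phi_j(x,y)=\sum_k y_k\,\partial f_k/\partial x_j(x)$ of degree $\le d$, and then applies the truncation Lemma~\ref{l:main2} to the family $(x_i\phi_j)_{i,j}$ of degree $\le d+1$ on the combined path $(x(t),y(t))$, which yields exactly the threshold $-(d+1)s+s=-ds$. You instead pick, for each fixed $t$, a (not necessarily continuous) minimising unit vector $y(t)$ by compactness of the sphere in $\bR^p$, and control $\rD f(x(t))-\rD f(\tilde x(t))$ directly by the mean value theorem: the tail $r(t)=O(t^{-ds-1})$ against second partials of size $O(t^{(d-2)s})$ on a ball of radius $O(t^s)$ gives an error $O(t^{-2s-1})$, which after multiplication by $\|\tilde x(t)\|=O(t^s)$ still vanishes. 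The exponent bookkeeping checks out ($-ds-1+(d-2)s+s+s=-1<0$), and the degenerate case $d\le 1$ is correctly disposed of. Your argument is more elementary and self-contained --- no Curve Selection Lemma, no semi-algebraicity of $\nu$ --- and in fact shows that the weaker truncation threshold $-(d-1)s$ would already suffice for (iii). What the paper's construction buys, and what your proof does not supply, is the analytic covector path $y(t)$ and the polynomial conditions $\ord_t\left(x_i(t)\phi_j(x(t),y(t))\right)<0$: these are precisely the conditions (a')--(c') defining the algebraic arc set $\Arc_{\ity}(f)$ in Theorem~\ref{t:main2}, so for the main result one would still need to extract that algebraic formulation separately.
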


\begin{proof}
We treat here the case $p>1$. See Remark \ref{re:pneq1} for the case $p=1$.

By the definition of $\nu$ (Definition \ref{d:ra-ity} and \eqref{eq:ra-func}),  condition (c) means:

\begin{equation}\label{eq:nu-1}
  \|x(t)\|\, \left( \inf_{\|y\|=1}\|\rD f (x(t))^*(y)\| \right) \to 0, \mbox{ as } t \to \infty,
\end{equation} 
where $\rD f (x(t))^*$ denotes the adjoint  of $\rD f (x(t))$. 

 Since $\nu$ is a semi-algebraic mapping (see e.g \cite[Proposition 2.4]{KOS}), the Curve Selection Lemma and \eqref{eq:nu-1} imply that  there there exists an analytic path (see also the proofs of \cite[Theorem 3.2]{DRT} and \cite[Proposition 2.4]{CDTT} for this argument):

\[y(t)=\sum_{-\infty\leq i\leq 0}b_jt^j=(y_1(t),\ldots,y_p(t)),b_j\in\bR^p,\]
 such that $\|y(t)\|=1,\forall t \gg 0$, and that:
\begin{equation} \label{eq:ra-ga1}
 \|x(t)\|\left\|y_1(t)\frac{\partial f_1}{\partial x}(x(t))+\cdots+y_p(t)\frac{\partial f_p}{\partial x}(x(t))\right\|\to 0, \mbox{ as } t\to\infty,
\end{equation}
where $\frac{\partial f_i}{\partial x}(x(t)) := \left(\frac{\partial f_i}{\partial x_1}(x(t)),\ldots,\frac{\partial f_i}{\partial x_n}(x(t))\right)$ for $i=1,\ldots,p$.

For any fixed $j\in  \{1,\cdots,n\}$ we set $\phi_{j}\colon\bR^n\times\bR^p\to\bR$,
\begin{equation}\label{eq:phi}
 \phi_{j}(x,y) := \left(y_1 \frac{\partial f_1}{\partial x_j}(x)+\cdots+y_p \frac{\partial f_p}{\partial x_j}(x)\right).
\end{equation}
 
 It then follows that $\deg  \phi_{j}\leq d$ and that our path: 
\[ (x(t),y(t)) :=\left(\sum_{-\infty\leq i\leq s}a_it^i, \sum_{-\infty\leq i\leq 0}b_jt^j\right) \]
verifies the conditions:
\begin{enumerate}
\item[(1)] $\|x(t)\|\to\infty$  as $t\to\infty$,  and $\|y(t)\| = 1$;
\item[(2)] $x_i(t)\phi_{j}(x(t),y(t))\to 0$ as $t\to\infty,$ for any $i,j\in\{1,\ldots,n\}.$
\end{enumerate}

Applying Lemma \ref{l:main2} to the mapping $(x_i\phi_{j})_{i,j=1}^n$, we get that, for any $D\leq -(d+1)s+s = -ds$,  the truncated path: 

\[(\tilde x(t),\tilde y(t)) :=\left(\sum_{D\leq i\leq s}a_it^i, \sum_{D\leq i\leq 0}b_jt^j\right)\]
verifies the conditions:
\begin{enumerate}[label={(\alph*')}]
\rm\item[(1')]   $\|\tilde x(t)\|\to\infty$ and $\|\tilde y(t)\|\to 1$ as $t\to\infty;$ 
\rm{\item[(2')] $\tilde x_i(t)\phi_{j}(\tilde x(t),\tilde y(t))\to 0,$} as $t\to\infty,$ for any $i,j\in\{1,2,\ldots,n\}.$
\end{enumerate}
These imply: 
\begin{equation}\label{eq:ra-ga3}
 \|\tilde x(t)\|\left\|\tilde y_1(t)\frac{\partial f_1}{\partial x}(\tilde x(t))+\cdots+\tilde y_p(t)\frac{\partial f_p}{\partial x}(\tilde x(t))\right\|\to 0 \mbox{ as } t\to\infty,
\end{equation}
and, since $\|\tilde y(t)\|\to 1$, we obtain:
\begin{equation}\label{eq:ra-ga4}
 \|\tilde x(t)\|\frac{1}{\|\tilde y(t)\|}\left\|\tilde y_1(t)\frac{\partial f_1}{\partial x}(\tilde x(t))+\cdots+\tilde y_p(t)\frac{\partial f_p}{\partial x}(\tilde x(t))\right\|\to 0, \mbox{ as } t\to\infty.
\end{equation}
The later  implies that $\|\tilde x(t)\| \nu(\rD f (\tilde x(t)))\to 0,$ as $t\to \infty$, which shows (iii).

Next,  (i) follows by  (1'), and (ii) follows from Lemma \ref{l:main2} for $h:= f$,  since $-ds<-ds+s$. 

\end{proof}

\begin{remark}\label{re:pneq1}
In case $p=1$, in the proof of Proposition \ref{p:main3} we may consider $\phi_{j}\colon\bR^n \to\bR,\phi_{j}(x,y)=  \frac{\partial f}{\partial x_j}(x)$ since in this case $y=1$.  
Then $\deg  \phi_{j}\leq d-1 $ and by applying Lemma \ref{l:main2} as above to the mapping $(x_i\phi_{j})_{i,j=1}^n$  we get that, for any $D\leq -ds+s$, the truncation $\tilde{\tilde{x}}(t)= \sum_{D\leq i\leq s}a_it^i$ satisfies (i), (ii) and (iii). 

In the definition  of $\Arc(f)$, the lower bound is $-ds+s$ instead of $-ds$. Since the value of the degree $s$ from Theorem \ref{l:main4} is $d^{n-1}$ in case  $p=1$, we recover the result in \cite{DT}.  
\end{remark}

\subsection{Arc space and the main result}

We may now apply to a polynomial map $f=(f_1,\ldots,f_p) \colon\bR^n \to \bR^p$, $\deg f_i\leq d$, a similar procedure as the one described by Jelonek and Kurdyka \cite{JK} in case $p=1$. 
Thus, in case $p>1$, we consider  the following space of arcs associated to 
$f$: 
\begin{equation}\label{eq:arc} 
\Arc(f) :=
\left\{ (x(t),y(t))=\left(\sum_{-ds \le i \le s} a_it^i,
\sum_{- ds \le j \le 0}b_j t^j\right),  \ (a_i,b_i)\in\bR^n\times\bR^p 
\right\},
\end{equation}

\noindent
where $s:= [p(d-1)+1]^{n-p} [p(d-1)(n-p) +2]^{p-1}$,  as in Theorem \ref{l:main4}. Then $\Arc(f)$ is a vector space of finite dimension. 

Referring to the notations in \eqref{eq:arc}, we define, in a similar manner as \cite[Definition 6.10]{JK}, the  \emph{asymptotic variety of arcs} $\Arc_{\ity}(f)\subset \Arc(f)$, as the algebraic subset  of the rational arcs $(x(t),y(t))\in \Arc(f)$ verifying the following conditions: 

\medskip
\begin{enumerate}
\item[(a')] $\exists k>0$ such that $a_k \not= 0\in \bR^n$, and  $\| b_0\| = 1$.
\item[(b')] $\ord_t f(x (t))\le 0$.
\item[(c')] $\ord_t \left( x_i(t)\phi_{j}(x(t),y(t))\right)<0,$ for any $i,j\in\{1,\ldots,n\},$ where $\phi_{j}$ is defined at \eqref{eq:phi} in the proof of Proposition \ref{p:main3}.
\end{enumerate}

\medskip
\noindent
Let us then set $\alpha_0 \colon \Arc_{\ity}(f) \to \bR^p$, $\alpha_0(\xi(t)) := \lim_{t\to\ity}f(x(t))$, where $\xi(t)=(x(t),y(t))$.

\smallskip

In view of the above results, we may now give an estimation of the nontrivial $\rho$-bifurcation set at infinity $N\Si(f)$, thus of the nontrivial bifurcation locus $N\Bi(f)$, cf Proposition \ref{p:inclusion}:

\begin{theorem}\label{t:main2} $N\Si(f) \subset \alpha_0(\Arc_{\ity}(f))\subset \cK_{\ity}(f)$.
\end{theorem}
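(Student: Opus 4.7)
The plan is to prove the two inclusions separately, using the tools assembled in the previous subsections. Both inclusions essentially consist of translating analytic conditions on paths (Milnor-set membership, Malgrange-Rabier condition) into the purely algebraic conditions (a')--(c') that define $\Arc_{\ity}(f)$, with the truncation step in Proposition \ref{p:main3} doing most of the heavy lifting.

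\textbf{First inclusion} $N\Si(f) \subset \alpha_0(\Arc_{\ity}(f))$. Starting from $t_0 \in N\Si(f)$, I pick $a \in \Omega_f$ (Lemma \ref{l:gen-a}) so that $t_0 \in N\cS_a(f)$ and apply Theorem \ref{l:main4} to produce an analytic arc $x(t) = \sum_{-\ity \le i \le s} a_i t^i$ lying in $\mathcal{M}_a(f)\setminus \Sing f$ with $\|x(t)\|\to\ity$ and $f(x(t))\to t_0$, where $s$ is exactly the bound appearing in the definition of $\Arc(f)$. By Theorem \ref{p:rho-rab} the three hypotheses (a), (b), (c) of Proposition \ref{p:main3} hold for $x(t)$; the proof of that proposition produces an auxiliary unit-norm arc $y(t) = \sum_{-\ity \le j \le 0} b_j t^j$ and shows that the truncated pair $(\tilde{x}(t),\tilde{y}(t))$ obtained by cutting at $-ds$ satisfies $\|\tilde{x}\|\to\ity$, $f(\tilde{x})\to t_0$ and the key asymptotic annulation $\tilde{x}_i(t)\phi_j(\tilde{x}(t),\tilde{y}(t))\to 0$ for all $i,j$. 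What remains is bookkeeping: the truncation is a rational arc of the required shape, $\|b_0\|=1$ (since $\|\tilde{y}\|\to 1$ and $b_0$ is the free coefficient), the existence of some $a_k\neq 0$ with $k>0$ follows from $\|\tilde{x}\|\to\ity$, and the annulation conditions transcribe into $\ord_t(\tilde{x}_i\phi_j) < 0$ and $\ord_t f(\tilde{x})\le 0$. Hence $(\tilde{x},\tilde{y})\in \Arc_{\ity}(f)$ and $\alpha_0(\tilde{x},\tilde{y})=t_0$.

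\textbf{Second inclusion} $\alpha_0(\Arc_{\ity}(f)) \subset \cK_{\ity}(f)$. Fix $(x(t),y(t))\in \Arc_{\ity}(f)$ and set $t_0:=\alpha_0(x(t),y(t))$. Condition (a') gives $\|x(t)\|\to\ity$ and $\|y(t)\|\to\|b_0\|=1$; condition (b') combined with $\ord_t f(x(t))\le 0$ implies that $f(x(t))$ has a finite limit, equal to $t_0$ by definition. Condition (c') says each $x_i(t)\phi_j(x(t),y(t))\to 0$, so the vector whose $j$th coordinate is $\phi_j(x(t),y(t))$ satisfies
\[
\|x(t)\|\, \bigl\| y_1(t)\tfrac{\partial f_1}{\partial x}(x(t)) + \cdots + y_p(t)\tfrac{\partial f_p}{\partial x}(x(t)) \bigr\| \to 0.
\]
Dividing by $\|y(t)\|\to 1$ and passing to the infimum over unit-norm $y$ reinterprets the left-hand side as $\|x(t)\|\nu(\rD f(x(t)))$ (up to a factor tending to $1$), so $\|x(t)\|\nu(\rD f(x(t)))\to 0$ and $t_0\in \cK_{\ity}(f)$.

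\textbf{Where the difficulty lies.} The second inclusion is essentially a direct unpacking of the definitions, once one remembers that $\nu(\rD f(x))$ is realised by the adjoint acting on unit vectors. The real content is in the first inclusion, and the most delicate issue there is ensuring that the truncation preserves \emph{everything simultaneously}: the fact that a truncation at $-ds$ still verifies the Malgrange--Rabier limit (which is guaranteed by Proposition \ref{p:main3}, itself resting on Lemma \ref{l:main2}), together with the fact that the same truncation lower bound works for all the products $x_i\phi_j$ and for $f\circ x$, since these have degrees bounded by $d$ and $d$ respectively. The one subtle point I need to check carefully is that after truncation the vector $\tilde y(t)$ does not become identically zero and that its free coefficient $b_0$ is nonzero with $\|b_0\|=1$; this is exactly how the unit-norm normalisation of $y(t)$ was chosen in the derivation of Proposition \ref{p:main3}, so requesting $\|b_0\|=1$ in (a') is the precise algebraic trace of that normalisation.
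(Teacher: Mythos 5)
Your proposal is correct and follows essentially the same route as the paper's proof: Theorem \ref{l:main4} to produce the arc in $\mathcal{M}_a(f)\setminus\Sing f$, Theorem \ref{p:rho-rab} to verify hypotheses (a)--(c) of Proposition \ref{p:main3}, the truncation of that proposition to land in $\Arc_{\ity}(f)$, and the observation that (a')--(c') directly encode the Malgrange--Rabier condition for the second inclusion. The extra bookkeeping you spell out (the normalisation $\|b_0\|=1$, the nonvanishing of some $a_k$ with $k>0$, and the bound on the degrees of the $x_i\phi_j$) is exactly what the paper leaves implicit.
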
 

\begin{proof}
If $\alpha\in N\Si(f)$ then $\alpha\in N\cS_a(f)$ for any fixed $a\in \Omega_f$. By Theorem \ref{l:main4},  there exists a path 
 \[x(t)= \sum_{-\infty\leq i\leq s}a_it^i \in \mathcal{M}_a(f)\setminus \Sing f,\]
 such that $\lim_{t \to \infty} f(x(t))= \alpha$.
 It follows from Theorem \ref{p:rho-rab} that $x(t)$ verifies the conditions (a)--(c) of Proposition \ref{p:main3}. 
Moreover, the truncation $\tilde x$ defined in the same Proposition \ref{p:main3}  verifies the properties (i)--(iii). Since conditions (i)--(iii) are equivalent to conditions (a')--(c'), we conclude that the first inclusion holds.  

 The second inclusion  $\alpha_0(\Arc_{\ity}(f))\subset \cK_{\ity}(f)$ is a direct consequence of the definitions of $\Arc_{\ity}(f)$ and $\cK_{\infty}(f)$ since properties (a'), (b') and (c') characterize the values $\alpha_0\in \cK_{\infty}(f)$ as shown in the proof of Proposition \ref{p:main3}.  This completes our proof.
\end{proof}

Let us remark that the first inclusion can be strict, as shown by the next example:

\begin{example}[{\cite[Example 2.10]{DT}}] Let $f\colon\bR^2\to \bR$, $f(x,y)=y(x^2y^2+3xy+3)$. We have $N\Si(f)=\emptyset$, $0\in \alpha_0(\Arc_{\ity}(f))$ and $0\in \cK_{\ity}(f)$.
\end{example}


  In trying to prove the equality in place of the second inclusion in Theorem \ref{t:main2} one notices that the inverse inclusion depends on the possibility of truncating paths which detect some value $\alpha_0\in \cK_{\ity}(f)$ at the order provided by Theorem 
\ref{l:main4}. But our  Theorem 
\ref{l:main4} is based on paths in the Milnor set $\mathcal{M}_a(f)\setminus \Sing f$, which provide in principle lower degrees than working with the Malgrange-Rabier condition  (\ref{eq:ra-ity}), and we know that the later is not equivalent to $\rho$-regularity (cf \S \ref{s:reg-ity}).    Else, for the same reason, it would be difficult to obtain examples to disprove the inverse inclusion.



\end{document}